\DeclareFontFamily{U}{mathx}{\hyphenchar\font45}
\DeclareFontShape{U}{mathx}{m}{n}{
      <5> <6> <7> <8> <9> <10>
      <10.95> <12> <14.4> <17.28> <20.74> <24.88>
      mathx10
      }{}
\DeclareSymbolFont{mathx}{U}{mathx}{m}{n}
\DeclareMathAccent{\widecheck}      {0}{mathx}{"71}
\DeclareSymbolFont{bbold}{U}{bbold}{m}{n}
\DeclareSymbolFontAlphabet{\mathbbold}{bbold}
\newcommand{\ind}{\mathbbold{1}}
\DeclareMathOperator{\var}{var}
  \providecommand*{\toclevel@author}{999}
  \providecommand*{\toclevel@title}{0}
\DeclareMathOperator{\Ber}{Ber}
\begin{document}
\newcommand{\bsa}{\boldsymbol{a}}    % vector a
\newcommand{\bsb}{\boldsymbol{b}}    % vector b
\newcommand{\bsX}{\boldsymbol{X}}    % vector X
\newcommand{\dnorm}{\mathcal{N}}
\newcommand{\tmu}{\tilde{\mu}}
\newcommand{\hmu}{\hat{\mu}}
\newcommand{\hp}{\hat{p}}
\newcommand{\e}{\mathbb{E}}
\newcommand{\abs}[1]{\left|#1\right|}
\newcommand{\meanMCB}{\texttt{meanMCBer\_g}\xspace}
\newcommand{\Cheb}{\text{Cheb}}
\newcommand{\CLT}{\text{CLT}}
\newcommand{\Hoeff}{\text{Hoeff}}

\spdefaulttheorem{algo}{Algorithm}{\upshape \bfseries}{\upshape}
\title*{Guaranteed Monte Carlo Methods for Bernoulli Random Variables}
% Use \titlerunning{Short Title} for an abbreviated version of
% your contribution title if the original one is too long
\author{Lan Jiang \and Fred J.\ Hickernell}
% Use \authorrunning{Short Title} for an abbreviated version of
% your contribution title if the original one is too long
\institute{
Lan Jiang \and Fred J. Hickernell 
\at Illinois Institute of Technology, 10 E. 32nd st, Chicago, IL, USA 
\email{ljiang14@hawk.iit.edu}, \email{hickernell@iit.edu}
}
\maketitle

\abstract{Simple 
Monte Carlo is a versatile computational method with a convergence rate of $O(n^{-1/2})$. It can be used to estimate the means of random variables whose distributions are unknown. Bernoulli random variables, $Y$, are widely used to model success (failure) of complex systems.   Here $Y=1$ denotes a success (failure), and  $p=\e(Y)$ denotes the probability of that success (failure).  Another application of Bernoulli random variables is $Y=\ind_{R}(\bsX)$, where then $p$ is the probability of $\bsX$ lying in the region $R$. This article explores how estimate $p$ to a prescribed absolute error tolerance, $\varepsilon$, with a high level of confidence, $1-\alpha$.  The proposed algorithm automatically determines the number of samples of $Y$ needed to reach the prescribed error tolerance with the specified confidence level by using Hoeffding's inequality. The algorithm described here has been implemented in MATLAB and is part of the Guaranteed Automatic Integration Library (GAIL).}

\section{Introduction}\label{section1}
Monte Carlo is a widely simulation method for approximating means of random variables, quantiles, integrals, and optima. In the case of estimating the mean of a random variable $Y$, the Strong Law of Large Numbers ensures that the sample mean converges to the true solution almost surely, i.e.: $\lim_{n \to \infty} \hmu_n =\mu \text{ a.s.}$ \cite[Theorem 20.1]{JP04}.  The Central Limit Theorem (CLT) provides a way to construct an approximate confidence interval for the $\mu$ in terms of the sample mean assuming a known variance of $Y$, however, this is not a finite sample result.  A conservative fixed-width confidence interval under the assumption of a known bound on the kurtosis is provided by \cite{HJLO12}.

Here we construct a conservative fixed-width confidence interval for Bernoulli random variables, $Y$. Here the mean is the probability of success, i.e,  $p:= \e(Y)=\Pr(Y=1)$.  This distribution is denoted by $\Ber(p)$.  Possible applications include the probability of bankruptcy or a power failure, where the process governing $Y$ may have a complex form.  This means that we may be able to to generate independent and identically distributed (IID) $Y_i$, but not have a simple formula for computing $p$ analytically. 

This paper presents an automatic simple Monte Carlo method for constructing a fixed-width (specified error tolerance) confidence interval for $p$ with a \textit{guaranteed} confidence level.  That is, given a tolerance, $\varepsilon$, and confidence level, $1-\alpha$, the algorithm determines the sample size, $n$, to compute a sample mean, $\hp_n$, that satisfies the condition $\Pr(\abs{p-\hp_n}\leq \varepsilon) \geq 1-\alpha$.  Moreover, there is an explicit formula for the computational cost of the algorithm in terms of $\alpha$ and $\varepsilon$. A publicly available implementation of our algorithm, called \meanMCB, is part of the next release of the Guaranteed Automatic Integration Library \cite{GAIL_1_3}.

Before presenting our new ideas, we review some of the existing literature.  While the CLT is often used for constructing confidence intervals, it relies on unjustified approximations.  We would like to have confidence intervals backed by theorems.

As mentioned above, \cite{HJLO12} presents a  reliable fixed-width confidence interval for evaluating the mean of an arbitrary random variable via Monte Carlo sampling based on the assumption that the kurtosis has a known upper bound. This algorithm uses the Cantelli's inequality to get a reliable upper bound on the variance and applies the Berry-Esseen inequality to determine the sample size needed to achieve desired confidence interval width and confidence level. For the algorithm in \cite{HJLO12} the distribution of the random variable is arbitrary. 

Wald confidence interval \cite[Section 1.3.3]{Agresti02} is a commonly used one based on maximum likelihood estimate, unfortunately, it performs poorly when the sample size $n$ is small or the true $p$ is close to 0 or 1. Agresti \cite[Section 1.4.2]{Agresti02} suggested constructing confidence intervals for binomial proportion by adding a pseudo-count of $z_{\alpha/2}/2$ successes and failures.  Thus, the estimated mean would be $\tilde{p}_n =(n\hp_n+z_{\alpha/2}/2) / (n+z_{\alpha/2})$. This method is also called adjusted Wald interval or Wilson score interval, since it was first discussed by E. B. Wilson \cite{wilson27}. This method performs better than the Wald interval.  However, it is an approximate result and carries no guarantee.

Clopper and Pearson \cite{CP34} suggested a tail method to calculate the exact confidence interval for a given sample size $n$ and uncertainty level $\alpha$. Sterne \cite{sterne54}, Crow \cite{crow56}, Blyth and Still \cite{BS83} and Blaker \cite{Blaker00} proposed different ways to improve the exact confidence interval, however, all of them were only tested on small sample sizes, $n$.  Moreover, these authors did not suggest how to determine $n$ that gives a confidence interval with fixed half-width $\varepsilon$. 

An outline of this paper follows. Section \ref{section2} provides the key theorems and inequalities needed. Section \ref{section3} describes an algorithm, \meanMCB, that estimates the mean of Bernoulli random variables to a prescribed absolute error tolerance with guaranteed confidence level and the proof of its success. The computational cost of the algorithm is also derived. Section \ref{section4} provides a numerical example of \meanMCB and compares the computational cost to confidence intervals based on the Central Limit Theorem. The paper ends with the discussion of future work.

\section{Basic Theorems and Inequalities}\label{section2}

Confidence intervals for the mean of a random variable, $Y$, are based on the mean of IID samples of that random variable:
\begin{equation}
\hmu_n := \frac1n \sum_{i=1}^n Y_i, \qquad Y_1, Y_2, \ldots \text{ IID}.
\end{equation}
We review some theorems describing how close $\hmu_n$ must be to $\mu:=\e(Y)$.

\subsection{Chebyshev's Inequality}
Chebyshev's inequality may be used to construct a fixed-width confidence interval for $\mu$.  It makes relatively mild assumptions on the distribution of the random variable.

\begin{theorem}[Chebyshev's Inequality {\cite[6.1.c]{LB10}}] If $X$ is a random variable with mean $\mu$, then $\Pr(\abs{X-\mu} \ge \varepsilon) \le  \var(X)/\varepsilon^2$.
\end{theorem}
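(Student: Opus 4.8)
The plan is to derive Chebyshev's inequality from Markov's inequality applied to the squared deviation $(X-\mu)^2$. First I would recall (or establish inline) Markov's inequality: for any non-negative random variable $Z$ and any $a>0$, one has $\Pr(Z \ge a) \le \e(Z)/a$. This follows from the pointwise bound $Z \ge a\,\ind_{\{Z \ge a\}}$, which holds because $Z$ is non-negative, together with monotonicity of expectation and the identity $\e(\ind_{\{Z \ge a\}}) = \Pr(Z \ge a)$.

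Next I would specialize to $Z := (X-\mu)^2$, which is non-negative, and take $a := \varepsilon^2 > 0$. The key observation is the event identity $\{(X-\mu)^2 \ge \varepsilon^2\} = \{\abs{X-\mu} \ge \varepsilon\}$, valid since the square function is increasing on $[0,\infty)$. Combining this with Markov's inequality yields
\begin{equation*}
\Pr(\abs{X-\mu} \ge \varepsilon) = \Pr\bigl((X-\mu)^2 \ge \varepsilon^2\bigr) \le \frac{\e\bigl((X-\mu)^2\bigr)}{\varepsilon^2} = \frac{\var(X)}{\varepsilon^2},
\end{equation*}
where the last equality is just the definition of variance.

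Since the argument is elementary, there is no genuine obstacle; the only points needing a modicum of care are the pointwise bound $Z \ge a\,\ind_{\{Z \ge a\}}$ underlying Markov's inequality and the (harmless) observation that if $\var(X) = \e((X-\mu)^2)$ is infinite the stated bound is vacuous. If one prefers not to cite Markov's inequality as a separate fact, the one-line pointwise argument can be inlined directly with $Z = (X-\mu)^2$, giving a fully self-contained proof.
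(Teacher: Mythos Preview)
Your argument is correct and is the standard textbook proof of Chebyshev's inequality via Markov's inequality. Note, however, that the paper does not actually supply a proof of this statement: it is quoted as a known result with a citation to the literature, so there is no in-paper proof to compare against. Your self-contained derivation is fine to include if a proof is desired.
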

Choosing $X=\hmu_n$, noting that $\var(X) = \var(Y)/n$, and setting $\var(X)/\varepsilon^2=\alpha$ leads to the fixed-width confidence interval 
\[
\Pr(\abs{\hmu_{n_{\Cheb}}-\mu} \le \varepsilon) \ge 1- \alpha \qquad \text{for } n_{\Cheb}:= \left \lceil \frac{\var(Y)}{\alpha \varepsilon^2} \right \rceil,
\]
provided that $\var(Y)$ is known.  

For $Y \sim \Ber(p)$, we know that $\var(Y)=p(1-p) \le 1/4$.  Letting $\hp_n$ denote the sample mean of  Bernoulli random variables, we have the fixed-width confidence interval 
\begin{equation} \label{ChebBerCI}
\Pr(\abs{\hp_{n_{\Cheb}}-p} \le \varepsilon) \ge 1- \alpha \qquad \text{for } n_{\Cheb} := \left \lceil \frac{1}{4\alpha \varepsilon^2} \right \rceil, \quad Y \sim \Ber(p).
\end{equation}
The upper bound on $\var(Y)$ is used rather than the exact formula for $\var(Y)$ because $p$ is unknown.

The factor of $1/\alpha$ in $n_{\Cheb}$ makes this confidence interval quite costly. Fortunately, there are other options.

\subsection{Central Limit Theorem (CLT)}
The CLT describes how the distribution of $\hmu_n$ approaches a Gaussian distribution as $n \to \infty$.
\begin{theorem}[Central Limit Theorem {\cite[Theorem 21.1]{JP04}}] \label{clt} 
If $Y_1, \ldots, Y_n$ are IID with $\e(Y_i)=\mu$, then
$$
\frac{\hmu_n-\mu}{\sqrt{\var(Y)/n}} \to \dnorm(0,1) \quad \text{in distribution, as} \ \ n\to\infty.
$$
\end{theorem}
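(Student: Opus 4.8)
The plan is to prove the CLT by the method of characteristic functions, which is the most robust route when only finiteness of the second moment is assumed (as is implicit here, since the statement refers to $\var(Y)$). First I would reduce to the standardized case: set $\sigma^2 := \var(Y)$ and $Z_i := (Y_i - \mu)/\sigma$, so that the $Z_i$ are IID with $\e(Z_i) = 0$ and $\var(Z_i) = 1$. A short computation shows
\[
\frac{\hmu_n - \mu}{\sqrt{\var(Y)/n}} = \frac{1}{\sqrt{n}} \sum_{i=1}^n Z_i =: \frac{S_n}{\sqrt{n}},
\]
so it suffices to prove $S_n/\sqrt{n} \to \dnorm(0,1)$ in distribution.

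Next I would introduce the characteristic function $\varphi(t) := \e(e^{\mathrm{i} t Z_1})$. By independence, the characteristic function of $S_n/\sqrt{n}$ is $\varphi(t/\sqrt{n})^n$. The key analytic step is a second-order Taylor expansion of $\varphi$ near the origin: because $\e(Z_1) = 0$ and $\e(Z_1^2) = 1$, one gets $\varphi(t) = 1 - t^2/2 + o(t^2)$ as $t \to 0$, justified using only the finiteness of the second moment via the elementary bound $\abs{e^{\mathrm{i} x} - 1 - \mathrm{i} x + x^2/2} \le \min(\abs{x}^3/6,\, x^2)$ together with dominated convergence. Substituting $t/\sqrt{n}$ for $t$ gives $\varphi(t/\sqrt{n}) = 1 - t^2/(2n) + o(1/n)$ for each fixed $t$, and hence $\varphi(t/\sqrt{n})^n \to e^{-t^2/2}$, using the fact that $(1 + c_n/n)^n \to e^c$ whenever $c_n \to c$, with a little care taken with the complex logarithm.

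Finally I would invoke L\'evy's continuity theorem: since $\varphi(t/\sqrt{n})^n$ converges pointwise to $e^{-t^2/2}$, which is continuous at $t = 0$ and is the characteristic function of $\dnorm(0,1)$, convergence in distribution follows. I expect the main obstacle to be not the algebra of the Taylor expansion but the two ``soft analysis'' ingredients that make the argument rigorous: controlling the remainder in the expansion of $\varphi$ tightly enough to pass to the limit, and the continuity theorem itself, whose proof rests on a tightness argument. As a more self-contained alternative I would mention the Lindeberg replacement method, which swaps the $Z_i$ one at a time for independent standard Gaussians and estimates the resulting telescoping sum with a third-order Taylor expansion of a smooth test function, trading the Fourier machinery for a careful term-by-term bound.
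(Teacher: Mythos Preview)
Your argument is correct and is the standard characteristic-function proof of the CLT: reduce to mean-zero, unit-variance summands, Taylor-expand the characteristic function to second order using only $\e(Z_1^2)<\infty$, take the $n$th power, and invoke L\'evy's continuity theorem. The remainder control you sketch (via $\min(\abs{x}^3/6,\,x^2)$ and dominated convergence) is exactly what is needed, and your mention of the Lindeberg swapping method as an alternative is apt.

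However, note that the paper does not supply its own proof of this theorem at all: it is merely stated and attributed to \cite[Theorem~21.1]{JP04}, where the characteristic-function proof you outline is indeed the one given. So there is no ``paper's own proof'' to compare against; your proposal simply fills in what the cited reference contains.
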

This theorem implies an approximate confidence interval, called a CLT confidence interval, of the form
\[
\Pr(\abs{\hmu_{n_{\CLT}}-\mu} \le \varepsilon) \approx 1- \alpha \qquad \text{for } n_{\CLT} := \left \lceil \frac{z_{\alpha/2}\var(Y)}{\alpha \varepsilon^2} \right \rceil,
\]
where $z_{\alpha/2}$ is the $1-\alpha/2$ quantile of the standard Gaussian distribution.  When $\var(Y)$ is unknown, it may be replaced by the sample variance. 
For Bernoulli random variables we use the upper bound on $\var(Y)$ to obtain
\begin{equation} \label{CLTBerCI}
\Pr(\abs{\hp_{n_{\CLT}}-p} \le \varepsilon) \gtrapprox 1- \alpha \qquad \text{for } n_{\CLT} := \left \lceil \frac{z_{\alpha/2}}{4\varepsilon^2} \right \rceil, \quad Y \sim \Ber(p).
\end{equation}

Since $z_{\alpha/2}$ satisfies the equation
\begin{align*}
\frac {\alpha}{2} &= \int_{z_{\alpha/2}}^{\infty} \frac{\E^{-x^2/2}}{\sqrt{2 \pi}}  \, \D x = \left . \frac{- \E^{-x^2/2}}{\sqrt{2 \pi} x} \right \rvert_{z_{\alpha/2}}^{\infty} - \int_{z_{\alpha/2}}^{\infty} \frac{\E^{-x^2/2}}{\sqrt{2 \pi} x^2}  \, \D x  \\
& = \frac{\E^{-z_{\alpha/2}^2/2}}{\sqrt{2 \pi} z_{\alpha/2}}  - \int_{z_{\alpha/2}}^{\infty} \frac{\E^{-x^2/2}}{\sqrt{2 \pi} x^2}  \, \D x, 
\end{align*}
it follows that $z_{\alpha/2} = o(\sqrt{\log(1/\alpha)})$ as $\alpha \to 0$.  This is a much slower increase than the $1/\alpha$ term in the Chebyshev confidence interval.

Unfortunately, the CLT is an asymptotic result and not guaranteed to hold for a finite $n$.  We need a different inequality to provide a finite sample result.

\subsection{Hoeffding's Inequality}

The conservative fixed-width confidence interval for general random variables constructed by  \cite{HJLO12} uses the Berry-Esseen Inequality \cite[Section 4.1]{LB10} and Cantelli's Inequality \cite[Section 6.1]{LB10}. However, in view of the particular form of the Bernoulli distribution, we may rely on inequalities that assume some bound on the random variable.  Here we use Hoeffding's Inequality \cite{H63}, which seems more suitable than, say, the Chernoff bound \cite{chernoff52}.  Below is the a special case of Hoeffding's Inequality for random variables lying in $[0,1]$.
\begin{theorem}[Hoeffding's Inequality {\cite{H63}}] \label{hoeff}
If $Y_1$, $Y_2$, $\cdots$, $Y_n$ are IID observations such that $\e(Y_i)=p$ and $0 \leq Y_i \leq 1$, define $\hp_n=\sum_{i=1}^n Y_i/n$. Then, for any $\varepsilon>0$, 
$$\Pr(\hp_n-p \geq \varepsilon) \leq e^{-2n\varepsilon^2},$$
$$\Pr(p-\hp_n \geq \varepsilon) \leq e^{-2n\varepsilon^2},$$
$$\Pr(\abs{\hp_n-p} \geq \varepsilon) \leq 2e^{-2n\varepsilon^2}.$$
\end{theorem}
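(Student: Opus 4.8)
The plan is to use the classical Chernoff-bounding argument: exponentiate, apply Markov's inequality, factor the moment generating function by independence, bound each factor with an auxiliary lemma, and optimise the free parameter.

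\textbf{Step 1 (exponential Markov bound).} I would first centre the variables by setting $X_i := Y_i - p$, so that $\e(X_i) = 0$, $-p \le X_i \le 1-p$, and $\hp_n - p = n^{-1}\sum_{i=1}^n X_i$. For any $s > 0$, monotonicity of $t\mapsto e^{st}$, Markov's inequality, and independence give
\[
\Pr(\hp_n - p \ge \varepsilon) = \Pr\!\left(e^{s\sum_i X_i} \ge e^{sn\varepsilon}\right) \le e^{-sn\varepsilon}\,\e\!\left(e^{s\sum_i X_i}\right) = e^{-sn\varepsilon}\prod_{i=1}^n \e\!\left(e^{sX_i}\right).
\]

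\textbf{Step 2 (Hoeffding's lemma).} The technical core is the estimate that a mean-zero random variable $X$ with $a \le X \le b$ satisfies $\e(e^{sX}) \le e^{s^2(b-a)^2/8}$ for every $s \in \R$. I would prove this by setting $\psi(s) := \log\e(e^{sX})$, checking $\psi(0) = \psi'(0) = 0$, and noting that $\psi''(s)$ is the variance of $X$ under the exponentially tilted law (density proportional to $e^{sx}$ with respect to the distribution of $X$); since that law is still supported in $[a,b]$, its variance is at most $(b-a)^2/4$, so a second-order Taylor expansion yields $\psi(s) \le s^2(b-a)^2/8$. (Alternatively, convexity of $x\mapsto e^{sx}$ dominates $e^{sX}$ by the chord joining $(a,e^{sa})$ and $(b,e^{sb})$, reducing the claim to an elementary one-variable inequality.) Applying this with $a = -p$, $b = 1-p$, so $b-a = 1$, gives $\e(e^{sX_i}) \le e^{s^2/8}$.

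\textbf{Step 3 (optimise and deduce the remaining bounds).} Combining the two steps,
\[
\Pr(\hp_n - p \ge \varepsilon) \le e^{-sn\varepsilon}\,e^{ns^2/8} = \exp\!\left(n\!\left(\tfrac{s^2}{8} - s\varepsilon\right)\right),
\]
and the exponent is minimised at $s = 4\varepsilon > 0$, giving $e^{-2n\varepsilon^2}$ — the first inequality. The second follows by applying the first to $1 - Y_i \in [0,1]$, which has mean $1-p$ and sample mean $1 - \hp_n$. The third is the union bound $\Pr(\abs{\hp_n-p}\ge\varepsilon) \le \Pr(\hp_n - p \ge \varepsilon) + \Pr(p - \hp_n \ge \varepsilon) \le 2e^{-2n\varepsilon^2}$.

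The only nontrivial ingredient is Hoeffding's lemma in Step 2; everything else is routine. Since Theorem \ref{hoeff} is quoted from \cite{H63}, one may also simply cite it, keeping the sketch above for completeness.
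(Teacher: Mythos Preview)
Your proof is correct and is exactly the classical Chernoff--Markov argument with Hoeffding's lemma as the key input; the optimisation, the symmetry trick for the lower tail, and the union bound are all fine. Note, however, that the paper does not actually prove Theorem~\ref{hoeff}: it is stated as a quoted result from \cite{H63} and used as a black box, so there is no ``paper's own proof'' to compare against --- your final remark that one may simply cite the result is precisely what the authors do.
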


\section{Guaranteed Fixed-Width Confidence Intervals for  the Mean of Bernoulli Random Variables}\label{section3}

Theorem \ref{hoeff} motivates the following algorithm for constructing fixed-width confidence intervals for an unknown $p$ in terms of IID samples of $Y\sim \Ber(p)$.
\begin{algo}[\meanMCB]\label{algabs}
Given an error tolerance $\varepsilon \geq 0$ and an uncertainty $\alpha \in (0,1)$ , generate
\begin{equation}\label{hoeffn}
n = n_{\Hoeff} := \left \lceil  \frac{\log(2/\alpha)}{2\varepsilon^2} \right \rceil
\end{equation}
IID Bernoulli random samples of $Y \sim \Ber(p)$, and use them to compute sample mean:
\begin{equation} \label{abserrp}
\hat{p}_n = \sum_{i =1}^n Y_i.
\end{equation}
Return $\hp_n$ as the answer.
\end{algo}
\begin{theorem}
Algorithm \ref{algabs} (\meanMCB) returns an answer that satisfies 
\begin{equation}\label{probabs}
\Pr(|\hat{p}_n -p| \leq \varepsilon) \geq 1-\alpha.
\end{equation}
at a computational cost of $n_{\Hoeff}$ samples.
\end{theorem}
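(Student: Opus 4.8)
The plan is to apply Hoeffding's Inequality (Theorem \ref{hoeff}) directly to the sample mean $\hp_n$ returned by Algorithm \ref{algabs}, and then verify that the prescribed sample size $n_{\Hoeff}$ in \eqref{hoeffn} is large enough that the resulting tail bound drops below $\alpha$. Since the desired conclusion \eqref{probabs} is precisely a two-sided deviation statement for $\hp_n$, and Hoeffding's Inequality is a two-sided deviation bound, the proof should amount to a single substitution followed by a short inequality manipulation.

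First I would check that the hypotheses of Theorem \ref{hoeff} hold for the samples drawn by the algorithm: $Y_1,\dots,Y_n$ are IID with $\e(Y_i)=p$, and $Y_i\sim\Ber(p)$ takes values in $\{0,1\}\subseteq[0,1]$. Hence the two-sided form yields, for any $\varepsilon>0$,
\[
\Pr(\abs{\hp_n-p}\ge\varepsilon)\le 2e^{-2n\varepsilon^2}.
\]
Next I would bound the right-hand side using $n=n_{\Hoeff}=\ceil{\log(2/\alpha)/(2\varepsilon^2)}$. The ceiling gives $n_{\Hoeff}\ge\log(2/\alpha)/(2\varepsilon^2)$, equivalently $2n_{\Hoeff}\varepsilon^2\ge\log(2/\alpha)$; since $x\mapsto e^{-x}$ is decreasing, this gives $2e^{-2n_{\Hoeff}\varepsilon^2}\le 2e^{-\log(2/\alpha)}=\alpha$. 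Combining the two estimates, $\Pr(\abs{\hp_n-p}\ge\varepsilon)\le\alpha$. Finally, passing to the complementary event,
\[
\Pr(\abs{\hp_n-p}\le\varepsilon)\ge\Pr(\abs{\hp_n-p}<\varepsilon)=1-\Pr(\abs{\hp_n-p}\ge\varepsilon)\ge 1-\alpha,
\]
which is exactly \eqref{probabs}. The cost claim is immediate, since Algorithm \ref{algabs} draws precisely $n_{\Hoeff}$ samples of $Y$ (plus $O(n_{\Hoeff})$ arithmetic to form the average).

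I do not anticipate a genuine obstacle here; the argument is a direct invocation of Theorem \ref{hoeff}. The only points needing a little care are the direction of the inequality when solving $2e^{-2n\varepsilon^2}\le\alpha$ for $n$ (it is preserved because $\log$ is increasing and both sides are positive), and the harmless gap between the events $\{\abs{\hp_n-p}<\varepsilon\}$ and $\{\abs{\hp_n-p}\le\varepsilon\}$, which only strengthens the conclusion. One also tacitly assumes $\varepsilon>0$ so that $n_{\Hoeff}$ is finite and well defined.
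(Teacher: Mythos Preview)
Your proposal is correct and follows essentially the same approach as the paper: both arguments use the ceiling bound $n_{\Hoeff}\ge \log(2/\alpha)/(2\varepsilon^2)$ to force $2e^{-2n\varepsilon^2}\le\alpha$ and then invoke the two-sided Hoeffding inequality of Theorem~\ref{hoeff}. Your version is slightly more explicit about verifying the hypotheses of Theorem~\ref{hoeff} and about the strict-versus-nonstrict inequality, but the substance is identical.
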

\begin{proof}
The formula for the sample size in \eqref{hoeffn} may be used to show that
\begin{equation*}
n = \left \lceil \frac{\log(2/\alpha)}{2\varepsilon^2} \right \rceil \geq  \frac{\log(2/\alpha)}{2\varepsilon^2} \Rightarrow 
1- 2e^{-2n\varepsilon^2} \geq 1-\alpha.
\end{equation*}
Applying  Hoeffding's inequality in Theorem \ref{hoeff} leads directly to \eqref{probabs}. \qed
\end{proof}

\section{Numerical Examples}\label{section4}
Algorithm \ref{algabs} (\meanMCB) has been implemented in MATLAB \cite{MATLAB14}, using the name \meanMCB.  It will be part of the next release of the Guaranteed Automatic Integration Library (GAIL) \cite{GAIL_1_3}.  The GAIL library also includes automatic algorithms from  \cite{CDHHZ13} and \cite{HJLO12}.

\subsection{Demonstration for $Y\sim \Ber(p)$ for Various $p$ and $\varepsilon$}
To demonstrate its performance the algorithm \meanMCB was run for $500$ replications, each with a different $p$ and $\varepsilon$, and with a fixed confidence level $1-\alpha=95\%$. The logarithms of $p$ and $\varepsilon$ were chosen independently and uniformly, namely 
\[
\log_{10} p \sim \mathcal{U}[-3,-1], \qquad \log_{10} \varepsilon \sim \mathcal{U}[-5,-2].
\]
For each replication, the inputs $\varepsilon$ and $\alpha=5\%$ were provided to \meanMCB, along with a $\Ber(p)$ random number generator, and the answer $\hp_n$ was returned.  

Figure \ref{fig:abserrex} shows the ratio of the true error to the absolute error tolerance, $\abs{p-\hp_n}/\varepsilon$, for each of the $500$ replications, plotted against $p$.  All of the replications resulted in $\abs{p-\hp_n} \le \varepsilon$, which is better than guaranteed $95\%$ confidence level.  For some of these replications, \meanMCB was asked to exceed its sample budget of $10^{10}$, namely when $\varepsilon \le \sqrt{\log(2/0.05)} \times 10^{-5} = 3.69 \times 10^{-5}$.

  \begin{figure}[htbp]
    \centering
    \includegraphics[width=9cm]{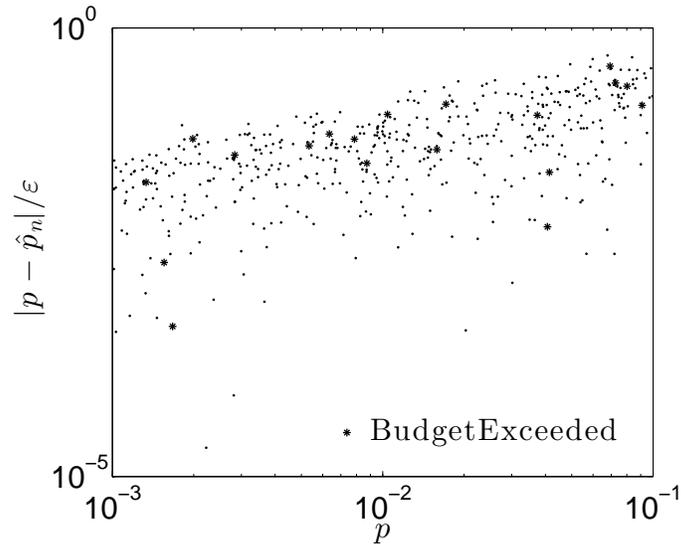} % requires the graphicx package
    \caption{Ratio of the actual absolute error to the error tolerance from \meanMCB versus $p$, for different random samples of $\Ber(p)$ random variables.}
    \label{fig:abserrex}
 \end{figure}
 
While it is encouraging to see that \meanMCB provides the correct answer in all cases, it is concerning that \meanMCB is rather conservative for small $p$.  This is due to the fact that the error of $\hp_n$ using $n$ samples is expected to be proportional to $\sqrt{\var(Y)/n}=\sqrt{p(1-p)/n}$.  Even though the error is small for small $p$, our algorithm does not take advantage of that fact.  To do so would require at least a loose lower bound on $p$ at the same time that the algorithm is trying to determine the sample size needed to estimate $p$ carefully.

\subsection{CLT \& Hoeffding's Inequality Confidence Interval Cost Comparison}
By using Hoeffding's inequality to construct guaranteed fixed-width confidence interval, we definitely incur additional cost compared to an approximate CLT confidence interval.  The ratio of this cost is 
\begin{equation}
\frac{n_{\Hoeff}}{n_{\CLT}} = \frac{\left \lceil \log(2/\alpha)/{2\varepsilon^2} \right \rceil}{\left \lceil{ \Phi^{-1}(1-\alpha/2)}/{4 \varepsilon^2}\right\rceil} \approx  \frac{2\log(2/\alpha)}{\Phi^{-1}(1-\alpha/2)}.
\end{equation}
This ratio essentially depends on the uncertainty level $\alpha$ and is plotted in Figure \ref{fig:ratiovsalpha}. For $\alpha$ between $0.01\%$ to $10\%$ this ratio is between 3.64 to 5.09, which we believe is a reasonable price to pay for the added certainty of $\meanMCB$.

  \begin{figure}[htbp]
    \centering
    \includegraphics[width=8cm]{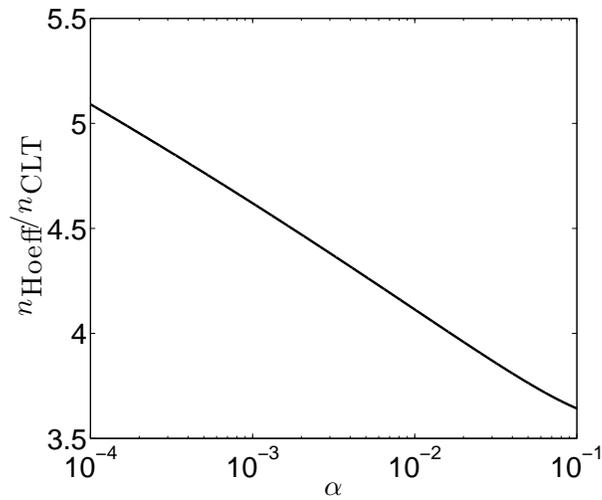} % requires the graphicx package
    \caption{The computational cost ratio of using Hoeffding's inequality and the CLT to construct a fixed-width confidence interval.}
    \label{fig:ratiovsalpha}
 \end{figure}

\section{Future Work}\label{section5}
Recent work by our collaborators and ourselves on automatic algorithms as described in \cite{CDHHZ13, HJLO12, HicJim16a, JimHic16a} is prompted by a desire to provide robust and practical Monte Carlo, quasi-Monte Carlo, and other numerical algorithms.  We want algorithms that require minimal input from practitioners and are theoretically justified.  This is also the motivation for \meanMCB that constructs fixed-width confidence intervals for Bernoulli random variables.  The algorithms in the above-mentioned references are available, or will soon be available, in the GAIL MATLAB toolbox \cite{GAIL_1_3}.

While Algorithm \ref{algabs} satisfies an absolute error tolerance, an important problem for further research is to construct an algorithm that satisfies a \emph{relative} error tolerance, i.e., $\Pr(\abs{p-\hp}/p\leq \varepsilon_r) \geq 1-\alpha$. Such an algorithm would need to find at least a lower bound on $p$.  One would expect the number of samples required then to be proportional to $\var(Y)/(p \varepsilon_r)^2 \sim 1/(p \varepsilon_r^2)$ as $p\varepsilon_r \to 0$.  Our attempts so far at using Hoeffding's inequality results in an algorithm with computational cost proportional to $1/(p \varepsilon_r)^2$ as $p\varepsilon_r \to 0$.  Although, the algorithm for estimating the parameter $p$ to some specified relative error tolerance is successful, the computational cost is extravagant for small $p$. The literature mentioned in the introduction may provide a clue to an algorithm with optimal cost.  If this problem can be solved, then a natural extension would be to construct confidence intervals that satisfy either an absolute or relative error criterion, i.e., confidence intervals of the form $\Pr(\abs{p-\hp}/p\leq \max(\varepsilon_a, \varepsilon_r p)) \geq 1-\alpha$.

\begin{acknowledgement}
This work were partially supported by the National Science Foundation Under grants DMS-1115392 and DMS-1357690.
 The authors would like to express their thanks to the organizers of Eleventh International Conference on Monte Carlo and Quasi-Monte Carlo Methods in Scientific Computing for hosting this great event. The first author would like to express her thanks to  Prof.\ Art B. Owen for arranging generous travel support through the United States National Science Foundation.
\end{acknowledgement}

%%%%%%%%%%%%%%%%%%%%%%%%%%%%%%%%%%%%%%%%%%%%%%%%%%%%%%%%%%%%%%%%%%%%%%%%%%%%%%%%%%%%%%%%%%%
%%% The bibliography
%
% BibTeX users please use
\bibliographystyle{spmpsci}
\bibliography{LJiang}
% and then copy paste the contents of the .bbl file here for the final version.
%
% E.g.:
%\begin{thebibliography}{99.}%
%
%\bibitem{ACN2013}
%N.~Achtsis, R.~Cools and D.~Nuyens.
%\newblock Conditional sampling for barrier option pricing under the Heston model.
%\newblock In J.~Dick, F.~Y.\ Kuo, G.~W.\ Peters and I.~H.\ Sloan, editors, {\em {M}onte {C}arlo
%  and Quasi-{M}onte {C}arlo Methods 2012}, pages 253--269. Springer-Verlag, 2013.
%
%\bibitem{CKN2006}
%R.~Cools, F.~Y. Kuo, and D.~Nuyens.
%\newblock Constructing embedded lattice rules for multivariate integration.
%\newblock {\em SIAM Journal on Scientific Computing}, 28(6):2162--2188, 2006.
%
%\bibitem{DP2010}
%J.~Dick and F.~Pillichshammer.
%\newblock {\em Digital Nets and Sequences: Discrepancy Theory and Quasi-Monte
%  Carlo Integration}.
%\newblock Cambridge University Press, 2010.
%
%\bibitem{IT2006}
%J.~{Imai} and K.~S.\ {Tan}.
%\newblock A general dimension reduction technique for derivative pricing.
%\newblock {\em Journal of Computational Finance}, 10(2):129--155, 2006.
%        
%\bibitem{LEC2009}
% P.~L'{\'E}cuyer.
%\newblock Quasi-Monte Carlo methods with applications in finance.
%\newblock {\em Finance and Stochastics}, 13(3):307--349, 2009.
%
%\end{thebibliography}
\end{document}